	\newcommand{\Prim}{\ensuremath{\operatorname{Prim}}}
	\theoremstyle{plain}
	\newtheorem{thm}{Theorem}
	\newtheorem{theorem}[thm]{Theorem}
	\newtheorem{corollary}[thm]{Corollary}
	\newtheorem{lemma}{Lemma}
	\newtheorem{proposition}[lemma]{Proposition}
	\newtheorem{theorem2}[lemma]{Theorem}
	\newtheorem{corollary2}[lemma]{Corollary}
	\theoremstyle{definition}
	\newtheorem{remark}[lemma]{Remark}
\begin{document}
	\title{Traceless AF embeddings and unsuspended $E$-theory}
	\author{James Gabe}
	\address{Current address: School of Mathematics and Applied Statistics, University of Wollongong, Wollongong NSW 2522, Australia.}
	\address{School of Mathematics and Statistics, University of Glasgow, University Place, Glasgow G12 8SQ, Scotland.}
        \email{jamiegabe123@hotmail.com}
        \subjclass[2010]{46L05, 46L80}
        \keywords{AF embeddability, traceless $\mathrm C^\ast$-algebras, unsuspended $E$-theory}
	\thanks{This work was funded by the Carlsberg Foundation through an Internationalisation Fellowship.}

\begin{abstract}
I show that quasidiagonality and AF embeddability are equivalent properties for traceless $\mathrm C^\ast$-algebras and are characterised in terms of the primitive ideal space. For nuclear $\mathrm C^\ast$-algebras the same characterisation determines when Connes and Higson's $E$-theory can be unsuspended.
\end{abstract}

\maketitle

\section{Introduction}

Quasidiagonality is a fundamental concept of finiteness for $\mathrm C^\ast$-algebras with a  tight connection to the structure of nuclear $\mathrm C^\ast$-algebras, the Elliott classification programme, and amenability of discrete groups, see \cite{TikuisisWhiteWinter-QDnuc}. Voiculescu's influential work on quasi\-diagonality \cite{Voiculescu-qdhtpy} had the important consequence that for any $\mathrm C^\ast$-algebra $A$, the suspension $C_0(\mathbb R, A)$ and the cone $C_0((0,1] , A)$ of $A$ are both quasi\-diagonal.

By Connes' groundbreaking work \cite{Connes-class} it follows that a von Neumann algebra with a separable predual admits an embedding into the hyperfinite II$_1$-factor $\mathcal R$ if and only if it is injective with a faithful, normal trace. The $\mathrm C^\ast$-algebraic analogue is embeddability into an approximately finite dimensional $\mathrm C^\ast$-algebra, known as AF embeddability, which was first studied by Pimsner and Voiculescu \cite{PimsnerVoiculescu-embedding}.  
An immediate consequence of being AF embeddable is that the $\mathrm C^\ast$-algebra is separable, exact and quasi\-diagonal, and it is a major open problem whether the converse holds. This has been verified in many concrete cases such as for certain $\mathrm C^\ast$-dynamical constructions \cite{Pimsner-embtransgroup}, \cite{Brown-AFemb}, and for all reduced group $\mathrm C^\ast$-algebras \cite{OzawaRordamSato-elementaryamenable}, \cite{TikuisisWhiteWinter-QDnuc}.

Ozawa \cite{Ozawa-AFembeddability} proved an analogue of Voiculescu's quasidiagonality result and showed that the suspension and the cone of any separable, exact $\mathrm C^\ast$-algebra is AF embeddable. A surprising consequence is that many $\mathrm C^\ast$-algebras of a very infinite nature, such as $C_0(\mathbb R, \mathcal O_2)$, turn out to have the very finite property of being AF embeddable.

The suspension and the cone of any $\mathrm C^\ast$-algebra have an important property in common; their primitive ideal spaces have no non-empty, compact, open subsets. In fact, for $X = \mathbb R$ or $X=(0,1]$ any compact, open subset of $\Prim C_0(X, A) \cong  X \times \Prim A$ can be projected onto a compact, open subset of $X$ and must therefore be empty.

I show that a separable, exact $\mathrm C^\ast$-algebra is AF embeddable provided its primitive ideal space has no non-empty, compact, open subsets (Corollary \ref{c:AFemb}), thus generalising the above result of Ozawa. Consequently, in the absence of traces being AF embeddable, quasidiagonal, and stably finite are all equivalent conditions characterised by this property of the primitive ideal space (Corollary \ref{c:traceless}).

In celebrated work of Connes and Higson \cite{ConnesHigson-deformations} the semigroup $[[A, B\otimes \mathbb K]]$ of homotopy classes of asymptotic morphisms between separable $\mathrm C^\ast$-algebras was introduced. They showed that $E(A,B):=[[ SA, SB\otimes \mathbb K]]$ is an abelian group  which is naturally isomorphic to $KK(A,B)$ whenever $A$ is nuclear. Here $S = C_0(\mathbb R,-)$ denotes the suspension functor.

Dadarlat and Loring \cite{DadarlatLoring-unsuspend} initiated the study of when the suspension functor induces an isomorphism $[[A , B\otimes \mathbb K]] \to E(A,B)$, or in other words, when $E$-theory can be unsuspended. 
As a final result, building on recent results of Dadarlat and Pennig \cite{DadarlatPennig-defnil}, I show that for a separable, nuclear $\mathrm C^\ast$-algebra $A$ the property of having no non-empty, compact, open subsets in the primitive ideal space characterises exactly when $E(A,B)$ can be unsuspended for any separable $\mathrm C^\ast$-algebra $B$ (Corollary \ref{c:unsuspend}). 

\subsection{Strategy and the main results}

The fundamental $\mathcal O_2$-embedding theorem of Kirchberg \cite{Kirchberg-ICM}, \cite{KirchbergPhillips-embedding} states that any separable, exact $\mathrm C^\ast$-algebra admits an embedding into the Cuntz algebra $\mathcal O_2$. An important consequence of this theorem is the Kirchberg--Phillips classification \cite{Kirchberg-simple}, \cite{Phillips-classification} of separable, nuclear, purely infinite, simple $\mathrm C^\ast$-algebras. Kirchberg \cite{Kirchberg-non-simple} proved a far reaching generalisation of this theorem by classifying all separable, nuclear, $\mathcal O_\infty$-stable $\mathrm C^\ast$-algebras. 
A crucial intermediate result for proving this generalised classification result was an ideal related version of the $\mathcal O_2$-embedding theorem.

Recently in \cite{Gabe-O2class} I proved a slightly different version of this embedding theorem. 
Consequently, whether or not a separable, exact $\mathrm C^\ast$-algebra $A$ embeds into a separable, nuclear, $\mathcal O_\infty$-stable $\mathrm C^\ast$-algebra $B$ can be determined solely by studying the primitive ideal spaces of $A$ and $B$.

The strategy for proving the main results of this paper is to use this embedding theorem for a particular $\mathrm C^\ast$-algebra $B = \mathcal A_{[0,1]}$ introduced and studied by Rørdam in \cite{Rordam-purelyinfAH}. The $\mathrm C^\ast$-algebra $\mathcal A_{[0,1]}$ is an inductive limit of the $\mathrm C^\ast$-algebras $C_0((0,1], M_{2^n})$ and is in particular an ASH-algebra, i.e.~an inductive limit of $\mathrm C^\ast$-subalgebras of matrices over commutative, separable $\mathrm C^\ast$-algebras. Astonishingly, by appealing to deep structural results of Kirchberg and Rørdam from \cite{KirchbergRordam-absorbingOinfty}, Rørdam showed that $\mathcal A_{[0,1]}$ is $\mathcal O_\infty$-stable. Combining this with the ideal related $\mathcal O_2$-embedding theorem one obtains the following main theorem. 

\begin{theorem}\label{t:A}
Let $A$ be a separable, exact $\mathrm C^\ast$-algebra. The following are equivalent.
\begin{itemize}
\item[$(i)$] $A$ embeds into Rørdam's purely infinite ASH-algebra $\mathcal A_{[0,1]}$;
\item[$(ii)$] $A$ embeds into $C_0((0,1],\mathcal O_2)$;
\item[$(iii)$] $A$ embeds into a zero-homotopic $\mathrm C^\ast$-algebra;
\item[$(iv)$] the primitive ideal space $\Prim A$ has no non-empty, compact, open subsets.
\end{itemize}
\end{theorem}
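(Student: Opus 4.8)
The plan is to run the cycle $(iv)\Rightarrow(i)\Rightarrow(ii)\Rightarrow(iii)\Rightarrow(iv)$, with the two implications emanating from $(iv)$ carrying all the weight and the remaining ones being essentially formal. First I would record the structural facts about the two target algebras: Rørdam's theorem gives that $\mathcal A_{[0,1]}$ is $\mathcal O_\infty$-stable, and as an inductive limit of the type~I algebras $C_0((0,1],M_{2^n})$ it is separable and nuclear; likewise $C_0((0,1],\mathcal O_2)\cong C_0((0,1])\otimes\mathcal O_2$ is separable, nuclear, and $\mathcal O_\infty$-stable since $\mathcal O_2\cong\mathcal O_2\otimes\mathcal O_\infty$. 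Thus both are legitimate targets for the ideal-related $\mathcal O_2$-embedding theorem of \cite{Gabe-O2class}, whose criterion for embedding a separable exact algebra is phrased purely in terms of the primitive ideal spaces (the residual $KK$-type obstructions being absorbed by these $\mathcal O_\infty$-stable, purely infinite targets).

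The heart of the argument, $(iv)\Rightarrow(i)$ and $(iv)\Rightarrow(ii)$, is to manufacture the datum required by that theorem out of the hypothesis that $\Prim A$ has no non-empty compact open subset. The guiding principle, already visible in the projection argument of the introduction, is that the absence of compact open subsets is exactly what allows one to map $\Prim A$ compatibly into the cone-like spaces $\Prim C_0((0,1],\mathcal O_2)\cong(0,1]$ and $\Prim\mathcal A_{[0,1]}$, since any compact open subset upstairs would be forced onto a necessarily empty compact open subset downstairs. I expect verifying the precise hypotheses of the embedding theorem, and checking that the obstructions vanish automatically for these targets, to be the main obstacle.

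For $(ii)\Rightarrow(iii)$ there is nothing to prove: $C_0((0,1],\mathcal O_2)$ is a cone, hence zero-homotopic. To close the cycle at $(i)$ I would show that $\mathcal A_{[0,1]}$ itself embeds into $C_0((0,1],\mathcal O_2)$: using its presentation as an inductive limit of the cones $C_0((0,1],M_{2^n})$ together with Rørdam's description of its ideals, I verify that $\Prim\mathcal A_{[0,1]}$ again has no non-empty compact open subset, and then apply the implication $(iv)\Rightarrow(ii)$ already obtained, now with $\mathcal A_{[0,1]}$ in place of $A$; composing with the embedding from $(i)$ yields $(i)\Rightarrow(ii)$.

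It remains to prove $(iii)\Rightarrow(iv)$, which I would do directly. A zero-homotopy of $\mathrm{id}_D$ is a $*$-homomorphism $D\to C([0,1],D)$ vanishing under evaluation at $0$ and equal to the identity at $1$; its image lies in the cone $C_0((0,1],D)$, so composing with an embedding $A\hookrightarrow D$ produces an embedding $A\hookrightarrow C_0((0,1],D)$, and thus $(iii)$ is equivalent to $A$ being a subalgebra of a cone. Suppose then that $\Prim A$ has a non-empty compact open subset, so that the corresponding ideal $I$ is a compact element of the lattice of ideals of $A$. Writing $A_s=A\cap C_0((s,1],D)$ for $s\in[0,1)$ gives ideals of $A$ with $\overline{\bigcup_{s>0}A_s}=A$, since for $a\in A_+$ the element $(a-\varepsilon)_+$ lies in $A$, vanishes on some $(0,s]$, and approximates $a$. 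Compactness of $I$ applied to this upward-directed family forces $I\subseteq A_{s_0}$ for some $s_0>0$, so $m:=\sup\{s:I\subseteq A_s\}>0$; by continuity elements of $I$ vanish on $(0,m]$, whence $I\subseteq A_m\subseteq C_0((m,1],D)$, itself a cone. Applying compactness of $I$ once more inside $A_m=\overline{\bigcup_{s>m}A_s}$ yields $I\subseteq A_{s_1}$ for some $s_1>m$, contradicting the definition of $m$. Hence $I=0$ and $\Prim A$ has no non-empty compact open subset. The only delicate point is the standard identification of compact open subsets of $\Prim A$ with compact elements of the ideal lattice, which I would invoke from the general theory.
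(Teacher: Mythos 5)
The implications you yourself identify as carrying all the weight, $(iv)\Rightarrow(i)$ and $(iv)\Rightarrow(ii)$, are not actually proved in your proposal, and the ``guiding principle'' you offer in their place points in the wrong direction. The embedding theorem of \cite{Gabe-O2class} takes as \emph{input} a $Cu$-morphism $\Phi\colon\mathcal I(A)\to\mathcal I(\mathcal A_{[0,1]})\cong[0,1]$, and to conclude that the resulting $\ast$-homomorphism is injective one needs $\Phi^{-1}(\{0\})=\{0\}$; so the entire problem is to \emph{manufacture} such a $\Phi$ from hypothesis $(iv)$. Your principle --- that a compact open subset of $\Prim A$ would be forced onto a nonempty compact open subset of the cone-like base space --- is precisely the \emph{necessity} of $(iv)$, i.e.\ a restatement of $(iii)\Rightarrow(iv)$; it gives no mechanism whatsoever for building a map $\mathcal I(A)\to[0,1]$ which preserves suprema and compact containment and kills only the zero ideal. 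This construction is the mathematical heart of the paper (Lemma \ref{l:3}) and is far from a routine verification of hypotheses: one first shows $\mathcal I(A)$ is a continuous lattice with a countable basis (Proposition \ref{p:1}), equips it with the Lawson topology to obtain a compact, metrisable topological semilattice, invokes \cite[Theorem VI-5.11]{GHKLMS-book} --- which requires exactly the absence of non-zero compact elements --- to find an arc chain $\mathcal C\subseteq\mathcal I(A)$ joining $0$ to $A$, identifies $\mathcal C$ order-homeomorphically with $[0,1]$ (a compact, connected, metrisable chain with two non-cut points), and finally takes $\Phi$ to be the lower adjoint of the resulting map $\Psi\colon[0,1]\to\mathcal I(A)$, checking via the adjunction that $\Phi$ is a $Cu$-morphism with trivial kernel. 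Nothing in your proposal substitutes for this step, so the theorem remains unproven as written.

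The parts you do prove are correct, and one of them is genuinely nicer than the paper's treatment: your argument for $(iii)\Rightarrow(iv)$, exhausting $A\subseteq C_0((0,1],D)$ by the ideals $A_s=A\cap C_0((s,1],D)$ and applying compactness of $I$ twice to contradict the maximality of $m=\sup\{s: I\subseteq A_s\}$, is elementary and self-contained, whereas the paper pushes the compact ideal forward along the $Cu$-morphism $\mathcal I(\phi)$ and projects the resulting compact open subset of $(0,1]\times\Prim D$ onto $(0,1]$. Likewise your closing of the cycle, $(i)\Rightarrow(ii)$ by verifying that $\mathcal A_{[0,1]}$ itself satisfies $(iv)$ (no non-zero element of $[0,1]$ is a compact element of that lattice) and then citing $(iv)\Rightarrow(ii)$, is structurally sound, where the paper instead uses that $\mathcal A_{[0,1]}$ is zero-homotopic together with the plain $\mathcal O_2$-embedding theorem. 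But these are the formal links of the argument; without the construction of the $Cu$-morphism in the previous paragraph, the cycle never gets started from $(iv)$.
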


The proof of the above main theorem is postponed until Section \ref{s:A}. Below are some corollaries of the theorem. 

As ASH-algebras are AF embeddable, see for instance \cite[Proposition 4.1]{Rordam-purelyinfAH}, the following corollary is immediate.

\begin{corollary}\label{c:AFemb}
Let $A$ be a separable, exact $\mathrm C^\ast$-algebra for which $\Prim A$ has no non-empty, compact, open subsets. Then $A$ is AF embeddable (and in particular quasidiagonal).
\end{corollary}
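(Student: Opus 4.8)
The plan is to invoke Theorem \ref{t:A} directly. Since the hypothesis is precisely condition $(iv)$, namely that $\Prim A$ has no non-empty, compact, open subsets, the implication $(iv) \Rightarrow (i)$ provides an embedding of $A$ into Rørdam's purely infinite ASH-algebra $\mathcal A_{[0,1]}$. All the genuine mathematical content is thereby transferred to Theorem \ref{t:A}, and what remains is only to observe that AF embeddability, and the accompanying quasidiagonality, are inherited along this embedding.

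Concretely, I would first recall that $\mathcal A_{[0,1]}$ is an ASH-algebra, being by construction an inductive limit of the building blocks $C_0((0,1], M_{2^n})$, and that every separable ASH-algebra is AF embeddable by \cite[Proposition 4.1]{Rordam-purelyinfAH}. Hence there is an injective $\ast$-homomorphism of $\mathcal A_{[0,1]}$ into some AF algebra $D$. Composing this with the embedding $A \hookrightarrow \mathcal A_{[0,1]}$ supplied by Theorem \ref{t:A} yields an injective $\ast$-homomorphism $A \hookrightarrow D$, using only that a composition of injective $\ast$-homomorphisms is again injective; this exhibits $A$ as AF embeddable.

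The parenthetical claim about quasidiagonality then follows formally from two standard permanence properties: AF algebras are quasidiagonal, being inductive limits of finite-dimensional $\mathrm C^\ast$-algebras, and quasidiagonality passes to $\mathrm C^\ast$-subalgebras. Consequently every AF embeddable $\mathrm C^\ast$-algebra is quasidiagonal, and in particular so is $A$. I do not expect any real obstacle in this argument, since its substance is entirely carried by Theorem \ref{t:A}; the transitivity of embeddings and the two cited permanence properties are routine, which is exactly why the corollary is immediate.
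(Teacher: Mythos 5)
Your proposal is correct and follows exactly the paper's route: the paper deduces the corollary immediately from Theorem \ref{t:A} together with the fact that ASH-algebras are AF embeddable, citing the same \cite[Proposition 4.1]{Rordam-purelyinfAH}. Your additional remarks on composing embeddings and on the permanence properties behind quasidiagonality merely spell out the routine details the paper leaves implicit.
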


An \emph{extended trace} $\tau \colon A_+ \to [0,\infty]$ is an additive, homogeneous map such that $\tau(a^\ast a) = \tau(aa^\ast)$ for all $a\in A$. An exact $\mathrm C^\ast$-algebra is called \emph{traceless} if all lower semicontinuous extended traces on $A$ only take the values $0$ and $\infty$. Due to Ozawa's result \cite{Ozawa-AFembeddability} it is known that many traceless $\mathrm C^\ast$-algebras are AF embeddable such as $C_0(\mathbb R,\mathcal O_2)$. Here is a complete characterisation of when this is the case. The proof is presented in Section \ref{s:traceless}.

\begin{corollary}\label{c:traceless}
Let $A$ be a separable, exact, traceless $\mathrm C^\ast$-algebra. The following are equivalent.
\begin{itemize}
\item[$(i)$] $A$ is AF embeddable;
\item[$(ii)$] $A$ is quasidiagonal;
\item[$(iii)$] $A$ is stably finite;
\item[$(iv)$] $\Prim A$ has no non-empty, compact, open subsets.
\end{itemize}
\end{corollary}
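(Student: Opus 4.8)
The plan is to verify the cycle of implications $(iv)\Rightarrow(i)\Rightarrow(ii)\Rightarrow(iii)\Rightarrow(iv)$, noting that tracelessness is only needed for the final implication. The implication $(iv)\Rightarrow(i)$ is exactly Corollary~\ref{c:AFemb}. For $(i)\Rightarrow(ii)$ I would use that any AF algebra is quasidiagonal and that quasidiagonality passes to $\mathrm C^\ast$-subalgebras, so an AF embedding of $A$ exhibits $A$ as quasidiagonal. For $(ii)\Rightarrow(iii)$ I would use the two standard facts that a quasidiagonal $\mathrm C^\ast$-algebra is finite and that $M_n(A)$ is quasidiagonal whenever $A$ is; together these give stable finiteness. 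None of these three steps uses the traceless hypothesis.

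The whole content is therefore the implication $(iii)\Rightarrow(iv)$, which I would prove in contrapositive form: if $\Prim A$ has a non-empty compact open subset $U$, then $A$ is not stably finite. Let $I\trianglelefteq A$ be the ideal with $\Prim I\cong U$, so that $\Prim I$ is compact. Two reductions come first. Since a non-zero lower semicontinuous extended trace on an ideal extends canonically to one on the ambient algebra, tracelessness of $A$ passes to $I$; and since an infinite projection in $M_n(I)\subseteq M_n(A)$ is still infinite in $M_n(A)$, it suffices to show that $I$ is not stably finite. Thus I am reduced to the following self-contained assertion: an exact, traceless $\mathrm C^\ast$-algebra $I$ with $\Prim I$ compact cannot be stably finite.

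To prove this assertion I would argue by contradiction and manufacture a non-trivial trace. Compactness of $\Prim I$ yields a positive element whose norm in every primitive quotient is bounded below by a fixed constant; cutting down by functional calculus gives a full positive element $a\in I$ that is compact in the sense of the Cuntz semigroup and plays the role of an order unit. Assuming $I$ stably finite, the existence theory for dimension functions on stably finite $\mathrm C^\ast$-algebras (in the spirit of Blackadar and Handelman), applied with $a$ as the normalising element, provides a non-zero lower semicontinuous $2$-quasitrace on $I$; by Haagerup's theorem that quasitraces on exact $\mathrm C^\ast$-algebras are traces, this is a genuine lower semicontinuous trace. Such a trace is automatically finite and non-zero on the Pedersen ideal of $I$, hence takes a finite non-zero value, contradicting tracelessness of $I$. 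Therefore $I$, and with it $A$, fails to be stably finite, closing the cycle.

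I expect the main obstacle to be the existence half of the last paragraph: guaranteeing a non-zero, densely finite trace on a stably finite algebra that need not be unital and need not contain any projection at all (the stably projectionless case). The classical Blackadar--Handelman existence result is phrased for unital algebras, so the crux is to exploit compactness of $\Prim I$ — equivalently the order-unit-like element $a$ — to reduce the non-unital situation to the unital one, after which Haagerup's theorem upgrades the resulting quasitrace to a trace and the Pedersen-ideal remark forces a finite non-zero value.
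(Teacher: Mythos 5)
Your overall architecture is sound, and several of your reductions are correct and essentially match standard practice: the cycle $(iv)\Rightarrow(i)\Rightarrow(ii)\Rightarrow(iii)\Rightarrow(iv)$ with tracelessness used only in the last step is exactly how the paper organises things; passing tracelessness to the ideal $I$ (lower semicontinuous extended traces do extend from ideals) and passing non-stable-finiteness from $I$ up to $A$ (via the unital inclusion $M_n(\widetilde I)\subseteq M_n(\widetilde A)$) are both fine. The genuine gap is the step you yourself flag as the ``main obstacle'', and it is not a technicality that a unitisation trick will repair. There is no ``existence theory for dimension functions on stably finite $\mathrm C^\ast$-algebras'' of the kind you invoke: in the non-unital setting, stable finiteness together with the existence of a full positive element $a$ does \emph{not} produce a dimension function normalised at $a$. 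The cone $C_0((0,1],\mathcal O_2)$ is separable, nuclear, stably finite (even stably projectionless), and contains full positive elements, yet it is traceless, so no lower semicontinuous $2$-quasitrace on it takes a finite non-zero value. What the Goodearl--Handelman machinery actually needs is the order-unit condition $(n+1)\langle a\rangle\not\leq n\langle a\rangle$ in the Cuntz semigroup for every $n$, and stable finiteness implies this for \emph{projections}, not for general positive elements (in the cone over $\mathcal O_2$ every non-zero positive element $a$ satisfies $2\langle a\rangle\leq\langle a\rangle$). So compactness of $\Prim I$ must enter the existence argument itself, not merely the choice of $a$: one has to show that if $(n+1)\langle a\rangle\leq n\langle a\rangle$ for some $n$, then -- using that $a$ can be chosen with $(a-\epsilon)_+$ still full and with a positive element acting as a unit on $a$ -- the full, properly infinite element $a^{\oplus n}\in M_n(I)$ yields an honest infinite projection in some matrix algebra over $\widetilde I$, contradicting stable finiteness. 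Manufacturing a projection from such data is a non-trivial lemma of Kirchberg--R{\o}rdam/Pasnicu--R{\o}rdam type (it is the engine behind \cite[Proposition 2.7]{PasnicuRordam-purelyinfrr0}, which the paper itself cites), not functional calculus. Your suggested fix -- ``reduce the non-unital situation to the unital one'' -- points in an unpromising direction: when $I$ is stably projectionless no hereditary subalgebra $\mathrm{her}(a)$ is unital, and every tracial state on $\widetilde I$ (which $\widetilde I$ does possess, being unital, exact and stably finite) may factor through $\widetilde I/I\cong\mathbb C$, again as the cone over $\mathcal O_2$ shows, so unitisation cannot detect the trace you need.

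For comparison, the paper resolves the same tension by running it in the opposite direction, which lets it outsource precisely the machinery you are missing to published results: it tensors with the Jiang--Su algebra and quotes R{\o}rdam's theorem \cite[Corollary 5.1]{Rordam-realrankZ} that exactness plus tracelessness makes $A\otimes\mathcal Z$ purely infinite (the quasitrace existence theory is hidden inside that citation), then uses \cite[Proposition 2.7]{PasnicuRordam-purelyinfrr0} to turn a non-empty compact open subset of $\Prim A\cong\Prim(A\otimes\mathcal Z)$ into a non-zero, hence properly infinite \cite[Theorem 4.16]{KirchbergRordam-purelyinf}, projection in $A\otimes\mathcal Z$, and finally contradicts stable finiteness of $A\otimes\mathcal Z$, which is preserved from $A$ by Lemma \ref{l:stablyfinite} because $\mathcal Z$ is MF and $A$ is exact. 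If you wish to keep your architecture, the honest repair is to prove the Goodearl--Handelman condition along the lines sketched above and then run dimension function $\Rightarrow$ lower semicontinuous $2$-quasitrace (Blackadar--Handelman) $\Rightarrow$ trace (Haagerup, in its non-unital extension) $\Rightarrow$ finite non-zero value, contradicting tracelessness; but be aware that this missing step is of essentially the same depth as the results the paper quotes.
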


The Blackadar--Kirchberg problem \cite[Question 7.3.1]{BlackadarKirchberg-genindlimfindim}, also known as the quasidiagonality question or abbreviated QDQ, asks whether any separable, nuclear, stably finite $\mathrm C^\ast$-algebra is quasidiagonal. Corollary \ref{c:AFemb} gives an affirmative answer for $\mathrm C^\ast$-algebras with no non-empty, compact, open subsets in their primitive ideal spaces. As such $\mathrm C^\ast$-algebras are always stably projectionless they are automatically stably finite.\footnote{Recall that a not necessarily unital $\mathrm C^\ast$-algebra $A$ is \emph{stably finite} if for every $n\in \mathbb N$, $M_n(\widetilde A)$ has no non-unitary isometries. In particular, stably projectionless $\mathrm C^\ast$-algebras are stably finite.} Combined with Corollary \ref{c:traceless} for the traceless case one immediately obtains the following.

\begin{corollary}
The Blackadar--Kirchberg problem has an affirmative answer for all traceless $\mathrm C^\ast$-algebras, and for all $\mathrm C^\ast$-algebras for which the primitive ideal space has no non-empty, compact, open subsets.
\end{corollary}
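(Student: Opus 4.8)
The plan is to read off both assertions directly from the two preceding corollaries, using only that every nuclear $\mathrm C^\ast$-algebra is exact; this is what allows the exactness hypotheses of Corollary \ref{c:AFemb} and Corollary \ref{c:traceless} to be relaxed to the nuclearity hypothesis demanded by the Blackadar--Kirchberg problem. Recall that the problem asks whether every separable, nuclear, stably finite $\mathrm C^\ast$-algebra is quasidiagonal, so in each of the two classes I must verify that stable finiteness forces quasidiagonality.

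For the traceless case I would take a separable, nuclear, stably finite, traceless $\mathrm C^\ast$-algebra $A$. Since $A$ is nuclear it is in particular exact, so Corollary \ref{c:traceless} applies verbatim. The implication $(iii) \Rightarrow (ii)$ of that corollary says precisely that stable finiteness implies quasidiagonality, which is exactly the affirmative answer sought.

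For the second class I would take a separable, nuclear $\mathrm C^\ast$-algebra $A$ whose primitive ideal space $\Prim A$ has no non-empty, compact, open subsets. Again nuclearity gives exactness, so Corollary \ref{c:AFemb} applies and yields that $A$ is AF embeddable, hence quasidiagonal. In fact this conclusion requires no appeal to stable finiteness at all: as noted in the footnote, such $\mathrm C^\ast$-algebras are automatically stably projectionless and therefore stably finite, so the hypothesis of the Blackadar--Kirchberg problem is met for free and the required implication holds \emph{a fortiori}.

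There is no genuine obstacle here: both statements are formal consequences of the corollaries already established, and the only point to check is the harmless replacement of the exactness hypothesis by the stronger hypothesis of nuclearity.
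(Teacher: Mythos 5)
Your proposal is correct and follows essentially the same route as the paper: the paper likewise deduces the second class from Corollary \ref{c:AFemb} (noting, as you do, that absence of non-empty, compact, open subsets of the primitive ideal space forces stable projectionlessness and hence stable finiteness automatically), and handles the traceless case via the implication from stable finiteness to quasidiagonality in Corollary \ref{c:traceless}, with nuclearity supplying the exactness hypotheses. Nothing is missing.
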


 The final corollary, which is proved in Section \ref{s:unsuspended}, shows that unsuspendability of Connes and Higson's $E$-theory is a spectral property for nuclear $\mathrm C^\ast$-algebras. Recall that for separable, nuclear $\mathrm C^\ast$-algebras $A$, $E(A,B)$ and $KK(A,B)$ are naturally isomorphic for any separable $\mathrm C^\ast$-algebra $B$ so the result below gives a characterisation of certain $KK$-groups.

\begin{corollary}\label{c:unsuspend}
Let $A$ be a separable, nuclear $\mathrm C^\ast$-algebra. The following are equivalent.
\begin{itemize}
\item[$(i)$] $[[ A, B \otimes \mathbb K]]$ is a group for any separable $\mathrm C^\ast$-algebra $B$; 
\item[$(ii)$] the suspension functor induces a natural isomorphism 
\begin{equation}
[[A, B\otimes \mathbb K]] \xrightarrow \cong E(A,B)
\end{equation}
 for any separable $\mathrm C^\ast$-algebra $B$;
\item[$(iii)$] $\Prim A$ has no non-empty, compact, open subsets.
\end{itemize}
\end{corollary}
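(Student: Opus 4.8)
The plan is to establish the cycle $(iii)\Rightarrow(ii)\Rightarrow(i)\Rightarrow(iii)$, treating the equivalence of $(i)$ and $(ii)$ as essentially formal and concentrating the genuinely new work on linking these analytic conditions to the spectral condition $(iii)$. The implication $(ii)\Rightarrow(i)$ is immediate, since a semigroup naturally isomorphic to the abelian group $E(A,B)$ of \cite{ConnesHigson-deformations} is itself a group. For the converse I would observe that $(i)$ is precisely the statement that $A$ is \emph{homotopy symmetric}, and that by the unsuspension theory of \cite{DadarlatLoring-unsuspend,DadarlatPennig-defnil} a separable $\mathrm C^\ast$-algebra is homotopy symmetric if and only if the suspension map $[[A,B\otimes\mathbb K]]\to[[SA,SB\otimes\mathbb K]]=E(A,B)$ is a natural isomorphism for every separable $B$. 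Thus $(i)\Leftrightarrow(ii)$ needs no further input, and the problem reduces to characterising homotopy symmetry by condition $(iii)$.

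For $(iii)\Rightarrow(i)$ I would pass through the notion of connectivity introduced in \cite{DadarlatPennig-defnil} together with the embedding supplied by Theorem \ref{t:A}. If $\Prim A$ has no non-empty, compact, open subsets, then Theorem \ref{t:A} yields an embedding $A\hookrightarrow C_0((0,1],\mathcal O_2)$ into the cone over $\mathcal O_2$. Cones are contractible, hence connective, and connectivity passes to sub-$\mathrm C^\ast$-algebras — one simply restricts the defining injective, null-homotopic morphism to $A$ — so $A$ is connective. As connective $\mathrm C^\ast$-algebras are homotopy symmetric by \cite{DadarlatPennig-defnil}, condition $(i)$ follows. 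The feature that makes this work is that connectivity, unlike homotopy symmetry itself, is a hereditary property, which is precisely why embeddability into a contractible algebra is the right hypothesis; here one must use clause $(ii)$ of Theorem \ref{t:A} rather than the embedding into $\mathcal A_{[0,1]}$, whose pure infiniteness places it at the opposite extreme from connectivity.

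It remains to prove the necessity $(i)\Rightarrow(iii)$, and this is where I expect the main difficulty to lie. Arguing contrapositively, a non-empty, compact, open subset $U\subseteq\Prim A$ must be converted into a separable $B$ for which $[[A,B\otimes\mathbb K]]$ fails to be a group. The compactness of $U$ should furnish a distinguished positive invariant supported on the corresponding ideal — either a genuine nonzero projection in a subquotient, or, when that subquotient is stably finite and projectionless, a tracial obstruction — and the associated homotopy class then carries a strictly positive quantity admitting no additive inverse. The prototype is $A=\mathbb C$, for which $[[\mathbb C,\mathbb K]]$ is the semigroup $\mathbb N$ of ranks of projections and is manifestly not a group; the task is to extract a non-invertible element of this type from an arbitrary compact open $U$, uniformly across the projectionless and the properly infinite cases. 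I expect this uniform extraction — equivalently, the structural fact that connective $\mathrm C^\ast$-algebras have no compact open subsets in their primitive ideal spaces — to be the crux, and the step where one leans hardest on the finiteness results of \cite{DadarlatPennig-defnil}.
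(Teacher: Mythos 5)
Three of your four implications are correct and essentially follow the paper's route. The directions $(ii)\Rightarrow(i)$ and $(i)\Rightarrow(ii)$ (via \cite[Theorem 4.3]{DadarlatLoring-unsuspend}, modulo the stability/Bott-periodicity bookkeeping the paper spells out) are exactly what the paper does. Your $(iii)\Rightarrow(i)$ is a legitimate minor variant: the paper also goes through connectivity, its heredity, and Theorem \ref{t:DP}, but it verifies connectivity of $\mathcal A_{[0,1]}$ (as an inductive limit of the cones $C_0([0,1),M_{2^n})$, using \cite[Theorem 3.3(c)]{DadarlatPennig-defnil}), i.e.\ it uses clause $(i)$ of Theorem \ref{t:A}, whereas you use clause $(ii)$ and the fact that a separable subalgebra of a single cone $C_0((0,1],B(\mathcal H))$ is connective via the constant-sequence lift; this is fine and arguably more direct. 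However, your stated reason for avoiding $\mathcal A_{[0,1]}$ --- that its pure infiniteness ``places it at the opposite extreme from connectivity'' --- is a misconception: Corollary \ref{c:connective} shows that $\mathcal A_{[0,1]}$ \emph{is} connective, and this is precisely the paper's argument. Pure infiniteness is no obstruction to connectivity; the only obstruction is the spectral one.

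The genuine gap is $(i)\Rightarrow(iii)$, which you explicitly defer as ``the crux'' and support only with a heuristic. The paper closes this direction by citation: $(i)$ gives homotopy symmetry (take $B=A$), Theorem \ref{t:DP} (this is where nuclearity enters) gives connectivity, and \cite[Proposition 2.7$(i)$]{DadarlatPennig-connective} --- note: the connectivity paper, not \cite{DadarlatPennig-defnil}, to which you point --- states that a connective $\mathrm C^\ast$-algebra has no non-empty, compact, open subsets in its primitive ideal space; that proposition in turn rests on \cite{PasnicuRordam-purelyinfrr0}. Your proposed from-scratch argument would not work as stated. A compact, open subset of $\Prim A$ need not produce a projection in any subquotient of $A$: a simple, stably projectionless $\mathrm C^\ast$-algebra (e.g.\ the Razak algebra) has one-point, hence compact and open, primitive ideal space, yet no projections anywhere stably. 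And in that case your fallback ``tracial obstruction'' is not an obstruction at all: cones and suspensions carry an abundance of traces and are nevertheless homotopy symmetric, so a trace by itself cannot prevent $[[A,B\otimes\mathbb K]]$ from being a group. What actually rescues the projectionless case in the known arguments is a tensoring mechanism --- compare the paper's proof of Corollary \ref{c:traceless}, where one tensors with $\mathcal Z$ to reach a purely infinite algebra, extracts a projection from the compact open set via \cite{PasnicuRordam-purelyinfrr0}, and derives a contradiction --- combined with the fact that connective algebras contain no non-zero projections and suitable permanence of connectivity. None of this appears in your sketch, so as written this direction is a genuine gap; it is, however, fillable verbatim by the citations above.
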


\section{Proof of Theorem \ref{t:A}}\label{s:A}

Recall that a \emph{complete lattice} $\mathcal L$ is a partially ordered set in which every subset has a supremum and an infimum. If $x,y\in \mathcal L$ then $x$ is said to be \emph{compactly contained} in $y$ (or way below $y$), written as $x \Subset y$, if whenever $(y_i)$ is an increasing net in $\mathcal L$ such that $y\leq \sup y_i$, then $x \leq y_i$ for some $i$. An element $x\in \mathcal L$ is \emph{compact} if $x \Subset x$.

Let $\mathcal I(A)$ denote the \emph{ideal lattice} of a $\mathrm C^\ast$-algebra $A$, i.e.~the partially ordered set of two-sided, closed ideals in $A$. It is well-known that $\mathcal I(A)$ is a complete lattice with $\inf I_\alpha = \bigcap I_\alpha$ and $\sup I_\alpha = \overline{\sum I_\alpha}$.

\begin{remark}\label{r:1}
For any $\mathrm C^\ast$-algebra $A$ there is an order isomorphism between $\mathcal I(A)$ and the set of open subsets of $\Prim A$, see for instance \cite[Theorem 4.1.3]{Pedersen-book-automorphism}. Clearly an ideal $I\in \mathcal I(A)$ is compact if and only if it corresponds to a compact, open set in $\Prim A$. In particular, $\Prim A$ has no non-empty, compact, open subsets if and only if $\mathcal I(A)$ has no non-zero, compact elements.
\end{remark}

A partially ordered set $\mathcal L$ is called a \emph{continuous lattice} if $\mathcal L$ is a complete lattice such that any $y \in \mathcal L$ is the supremum of the set $\{ x \in \mathcal L : x \Subset y\}$.

A \emph{basis} $\mathcal B$ of a continuous lattice $\mathcal L$ is a subset such that any $y\in \mathcal L$ the set $\{ x \in \mathcal B: x \Subset y\}$ is upwards directed and has supremum $y$.

\begin{proposition}\label{p:1}
$\mathcal I(A)$ is a continuous lattice for any $\mathrm C^\ast$-algebra $A$, and if $A$ is separable then $\mathcal I(A)$ has a countable basis as a continuous lattice.
\end{proposition}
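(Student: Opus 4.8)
The plan is to extract a concrete description of the relation $\Subset$ on $\mathcal I(A)$ through functional calculus, and then read off both continuity and the countable basis from it. The statement I would isolate as the engine of the whole proof is the following: for every $a \in A_+$ and every $\epsilon > 0$,
\[
\overline{A(a-\epsilon)_+A} \Subset \overline{AaA}.
\]
The key external input is Rørdam's lemma: if $x,y\in A_+$ and $\|x-y\|<\delta$, then $(x-\delta)_+$ lies in the closed two-sided ideal generated by $y$. I would also use two elementary lattice facts: that $x\leq y\Subset z$ implies $x\Subset z$, and that a finite join of elements each $\Subset z$ is again $\Subset z$.

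To prove the displayed claim, let $(J_i)$ be an increasing net of ideals with $\overline{AaA}\leq \sup_i J_i=\overline{\bigcup_i J_i}$, so $a$ lies in the $\mathrm C^\ast$-algebra $\overline{\bigcup_i J_i}$. I would choose $b\in\bigcup_i J_i$, say $b\in J_{i_0}$, with $\|a-b\|$ so small that $\|a^2-b^\ast b\|\leq\|a-b\|(\|a\|+\|b\|)<\epsilon^2$ (using $a=a^\ast$ to rewrite $a^2-b^\ast b=a(a-b)+(a-b)^\ast b$). Since $b^\ast b\in J_{i_0}$, Rørdam's lemma gives $(a^2-\epsilon^2)_+\in\overline{Ab^\ast bA}\subseteq J_{i_0}$. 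Now $(a-\epsilon)_+$ and $(a^2-\epsilon^2)_+$ are continuous functions of $a$ supported on the same set $\{t>\epsilon\}$; concretely $(a-\epsilon)_+=\phi(a)(a^2-\epsilon^2)_+$ with $\phi(t)=(t+\epsilon)^{-1}$ bounded and continuous on $\operatorname{spec}(a)\subseteq[0,\infty)$, so $(a-\epsilon)_+\in\overline{A(a^2-\epsilon^2)_+A}\subseteq J_{i_0}$. Passing to the generated ideal gives $\overline{A(a-\epsilon)_+A}\subseteq J_{i_0}$, which is exactly compact containment in $\overline{AaA}$. Continuity of $\mathcal I(A)$ is then immediate: for any $J\in\mathcal I(A)$ and any $a\in J_+$ one has $a=\lim_{\epsilon\to0}(a-\epsilon)_+$ with $(a-\epsilon)_+\in\overline{A(a-\epsilon)_+A}\Subset\overline{AaA}\subseteq J$, hence $a\in\sup\{I:I\Subset J\}$; since $J$ is the closed span of $J_+$ this supremum equals $J$.

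For the countable basis when $A$ is separable, I would fix a countable dense subset $\{a_n\}$ of $A_+$ and let $\mathcal B$ consist of all ideals $\overline{A\big(\sum_{j=1}^{k}(a_{n_j}-\epsilon_j)_+\big)A}$ with $n_j\in\mathbb N$ and $\epsilon_j\in\mathbb Q_{>0}$; this family is countable, and since each generator is a finite sum of positive elements, $\mathcal B$ is closed under finite joins, giving the required upward directedness of $\{I\in\mathcal B:I\Subset J\}$. To see that this set has supremum $J$, take $a\in J_+$ and small $\epsilon>0$, and pick $a_n$ with $\|a-a_n\|<\epsilon'<\epsilon$ for some rational $\epsilon'$. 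Rørdam's lemma gives $(a_n-\epsilon')_+\in\overline{AaA}\subseteq J$, so $\overline{A(a_n-\epsilon')_+A}\subseteq J$; and since $(a_n-\epsilon)_+=\big((a_n-\epsilon')_+-(\epsilon-\epsilon')\big)_+$, the engine lemma yields $\overline{A(a_n-\epsilon)_+A}\Subset\overline{A(a_n-\epsilon')_+A}\subseteq J$, so $\overline{A(a_n-\epsilon)_+A}\in\mathcal B$ is $\Subset J$ and contains the point $(a_n-\epsilon)_+$, which lies within $2\epsilon$ of $a$. Letting $\epsilon\to0$ shows $J\subseteq\sup\{I\in\mathcal B:I\Subset J\}$, and the reverse inclusion is clear.

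The main obstacle, and the point needing the most care, is that what the definition requires is compact \emph{containment}, not compactness: in the generality considered here $\mathcal I(A)$ typically has no non-zero compact elements at all, so the building blocks $\overline{A(a-\epsilon)_+A}$ must be shown to sit strictly below $\overline{AaA}$ rather than below themselves. Two further technical wrinkles are that an element of $\bigcup_iJ_i$ approximating $a$ need not be positive (circumvented above by squaring and passing to $b^\ast b$ and $a^2$), and that the countable family must be simultaneously directed and cofinal below each $J$, which is what forces allowing a separate rational cut-off $\epsilon_j$ for each generator. In the end the entire order-theoretic statement reduces to repeated applications of Rørdam's lemma combined with elementary functional calculus.
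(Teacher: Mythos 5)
Your proof is correct, and it is essentially a self-contained version of the first half of the paper's argument combined with a genuinely different construction in the second half. For continuity, the paper argues exactly as you do --- writing each ideal $I$ as the supremum of the ideals $\overline{A(x-\epsilon)_+A}$ over $x\in I_+$ and $\epsilon>0$ --- but it simply cites \cite[Lemma 2.2]{Gabe-O2class} for the key compact containment $\overline{A(x-\epsilon)_+A}\Subset I$; your derivation of this from Rørdam's lemma (the squaring trick to handle a non-positive approximant $b$, followed by the identity $(a-\epsilon)_+=(a+\epsilon)^{-1}(a^2-\epsilon^2)_+$) amounts to a proof of that cited lemma, so this part is the same approach made self-contained. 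Where you diverge is the countable basis: the paper invokes Pedersen's theorem \cite[Corollary 4.3.4]{Pedersen-book-automorphism} that $\mathcal I(A)$ has a countable basis as a \emph{complete} lattice, closes it under finite joins, and then upgrades it to a basis of the continuous lattice by taking a chain $I_1\Subset I_2\Subset\cdots$ with $I=\overline{\bigcup I_n}$ (again a citation, \cite[Corollary 2.3]{Gabe-O2class}) and interpolating basis elements between consecutive terms. You instead build an explicit countable basis from a dense sequence in $A_+$ and rational cut-offs, verifying join-closedness, directedness, and cofinality by hand. Your route buys transparency and independence from external references at the cost of length; the paper's buys brevity at the cost of two citations. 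One small patch to yours: in the cofinality step the outer cut-off $\epsilon$ must itself be chosen rational, since otherwise $\overline{A(a_n-\epsilon)_+A}$ need not lie in $\mathcal B$; this is harmless, as rational $\epsilon\to 0$ suffices for your limiting argument.
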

\begin{proof}
For any $I\in \mathcal I(A)$, one clearly has
\begin{equation}
I = \overline{\sum_{x\in I_+ , \epsilon >0} \overline{Af_\epsilon (x)A}} = \sup_{x\in I_+ , \epsilon >0}  \overline{Af_\epsilon (x)A},
\end{equation}
where $f_\epsilon \colon [0,\infty) \to [0,\infty)$ is the continuous function given by $f_\epsilon(t) = \max\{ 0, t-\epsilon\}$. By \cite[Lemma 2.2]{Gabe-O2class} one has $ \overline{Af_\epsilon (x)A} \Subset I$ for any $x\in I_+$ and $\epsilon >0$, so $\mathcal I(A)$ is a continuous lattice.

By \cite[Corollary 4.3.4]{Pedersen-book-automorphism}, $\mathcal I(A)$ has a countable basis $\mathcal B$ as a complete lattice, i.e.~for every $I\in \mathcal I(A)$ there is a subset $\mathcal S \subseteq \mathcal B$ such that $I = \overline{\sum_{J\in \mathcal S} J}$. By replacing $\mathcal B$ by its finite join closure (which is still countable) we may assume without loss of generality that $\mathcal B$ has finite joins.

Then $\mathcal B$ is a basis for $\mathcal I(A)$ as a continuous lattice. To see this let $I\in \mathcal I(A)$. By \cite[Corollary 2.3]{Gabe-O2class} there is a sequence $I_1 \Subset I_2 \Subset \dots$ in $\mathcal I(A)$ such that $I = \overline{\bigcup I_n}$. As $\mathcal B$ is an upwards directed basis we may by compact containment find $J_n \in \mathcal B$ such that $I_{2n} \subseteq J_n \subseteq I_{2n+1}$. Clearly $J_1 \Subset J_2 \Subset \dots$ and $I =\overline{\bigcup J_n}$. Moreover, $\{ J \in \mathcal B : J \Subset I\}$ is upwards directed since $\mathcal B$ is closed under finite joins and since finite joins preserve $\Subset$.
\end{proof}

Let $\mathcal L$ and $\mathcal L'$ be complete lattices and $\Phi \colon \mathcal L \to \mathcal L'$ be an order preserving map. Following \cite[Definition 2.6 and Remark 2.8]{Gabe-O2class}, $\Phi$ is called a \emph{$Cu$-morphism} if it preserves arbitrary suprema and compact containment.

The proof of the following lemma uses several non-trivial results about continuous lattices. The results are by now considered classical and can be found in the text book \cite{GHKLMS-book}. I will make appropriate citations when needed, as to make the proof as readable as possible for non-experts.

In what follows, $[0,1]$ denotes the unit interval with the usual order and topology.

\begin{lemma}\label{l:3}
Let $A$ be a separable $\mathrm C^\ast$-algebra such that the primitive ideal space $\Prim A$ has no non-empty, compact, open subsets. Then there exists a $Cu$-morphism $\Phi \colon \mathcal I(A) \to [0,1]$ such that $\Phi^{-1}(\{0\}) = \{ 0\}$.
\end{lemma}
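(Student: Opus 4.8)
The plan is to realise $\Phi$ by thresholding a continuous increasing ``path of ideals''. Concretely, I would look for an increasing family $(K_t)_{t \in [0,1]}$ of closed ideals of $A$ with $K_1 = A$ and set
\[
\Phi(I) := \inf\{ t \in [0,1] : I \subseteq K_t \}.
\]
First I would dispatch the easy half: for an \emph{arbitrary} increasing family this formula already defines an order preserving map that preserves arbitrary suprema. Indeed, if $t > \sup_\alpha \Phi(I_\alpha)$ then each $I_\alpha \subseteq K_t$, hence $\overline{\sum_\alpha I_\alpha} \subseteq K_t$ since $K_t$ is closed, giving $\Phi(\sup_\alpha I_\alpha) \le t$; the reverse inequality is monotonicity. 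Thus the only two genuine requirements are faithfulness and preservation of compact containment, and both become properties of the path $(K_t)$.

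Next I would isolate these two properties. Faithfulness $\Phi^{-1}(\{0\}) = \{0\}$ is equivalent to $\bigcap_{t>0} K_t = 0$, since $\Phi(I) = 0$ precisely when $I \subseteq K_t$ for all $t > 0$. For compact containment I would demand that $t \mapsto K_t$ be continuous from both sides, i.e.\ $K_c = \bigcap_{t>c}K_t$ and $K_c = \sup_{t<c}K_t$ for every $c$. Granting this, suppose $I \Subset J$ and put $c = \Phi(J)$; then $J \subseteq K_t$ for all $t>c$, so right continuity forces $J \subseteq K_c$, whence $I \Subset K_c = \sup_{t<c}K_t$, and the defining property of $\Subset$ yields $I \subseteq K_t$ for some $t < c$. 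Therefore $\Phi(I) \le t < c = \Phi(J)$, while if $c = 0$ then $\Phi(I)=0$; either way $\Phi(I) \Subset \Phi(J)$ in $[0,1]$. So the whole statement reduces to constructing a \emph{continuous} increasing path of ideals from $0$ to $A$ whose tail at $0$ has trivial intersection.

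This construction is where the hypothesis enters and is, I expect, the main obstacle. The relevant input is that $\mathcal I(A)$ is a continuous lattice with a countable basis (Proposition \ref{p:1}) and, by Remark \ref{r:1}, has \emph{no} non-zero compact elements. I would build $K_q$ for dyadic rationals $q \in [0,1]$ by a Urysohn-type recursion: start from $K_0 = 0$ and $K_1 = A$, and at each stage insert a midpoint ideal via the interpolation property of $\Subset$ in a continuous lattice (given $K_q \Subset K_{q'}$ there is $y$ with $K_q \Subset y \Subset K_{q'}$). Using that finite joins preserve $\Subset$ together with an approximating sequence $A_1 \Subset A_2 \Subset \cdots$ with $\overline{\bigcup A_n} = A$, I would arrange $\sup_q K_q = A$; extending by $K_t := \sup_{q<t}K_q$ makes the path left continuous, and the absence of non-zero compact elements is exactly what allows the interpolation to proceed without forced jumps, so that right continuity also holds. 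Finally, to secure $\bigcap_{t>0}K_t = 0$ I would enumerate the countable basis and, at successive stages, push each non-zero basis element out of the bottom of the chain; here again the absence of compact elements is essential, since a non-zero compact element can never be separated from $0$ by a continuous path. (Indeed any $Cu$-morphism into $[0,1]$ sends a compact $b$, for which $\Phi(b)\Subset\Phi(b)$, to the only compact element $0$ of $[0,1]$, so the hypothesis is also necessary.) Balancing these simultaneous demands — growing up to $A$ while faithfully shrinking to $0$, scheduled along the countable basis — is the delicate part of the argument.
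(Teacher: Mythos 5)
Your first half is correct, and it is essentially the paper's own first half made explicit: the paper also reduces the lemma to producing a continuous, order preserving map $\Psi\colon[0,1]\to\mathcal I(A)$ with $\Psi(0)=0$ and $\Psi(1)=A$, and then takes $\Phi$ to be the lower adjoint of $\Psi$, which is exactly your thresholding formula $\Phi(I)=\min\{t: I\subseteq \Psi(t)\}$; the facts you verify by hand (preservation of suprema, preservation of $\Subset$, faithfulness) are there quoted from \cite[Definition O-3.1, Theorem O-3.3]{GHKLMS-book} and the fundamental theorem of compact semilattices. Your two-sided continuity conditions on $(K_t)$ are precisely continuity of $\Psi$ for the Lawson topology. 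So the entire weight of the lemma rests on constructing the path, which you rightly flag as the main obstacle --- and there your sketch has a genuine gap.

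The stage-by-stage step is actually fine, indeed slightly better than you argue: in a continuous lattice with no non-zero compact elements, interpolation can always be made \emph{strict} (if every $y$ with $K_q\Subset y\Subset K_{q'}$ were equal to $K_q$ or $K_{q'}$, then one of these two would satisfy $y\Subset y$, hence be compact, hence be $0$, and then $K_{q'}=\sup\{y: y\Subset K_{q'}\}=0$, a contradiction). The unjustified --- and, as stated, false --- step is the passage to the limit: you assert that the absence of compact elements makes ``right continuity also hold'' for $K_t:=\sup_{q<t}K_q$, but nothing in a stage-by-stage interpolation prevents a \emph{gap} at an accumulation point, i.e.\ $\bigcap_{q>c}K_q\supsetneq \sup_{q<c}K_q$, and such a gap destroys preservation of $\Subset$. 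Concretely, take $A=C_0(\mathbb R,\mathcal O_2)$, so $\mathcal I(A)\cong\mathcal O(\mathbb R)$ has no non-zero compact elements; fix an irrational $c\in(0,1)$ and consider the skeleton $K_q=(-q,q)$ for dyadic $q<c$, $K_q=(-q,q)\cup\bigl(1-(q-c),\,3+(q-c)\bigr)$ for dyadic $q>c$, and $K_1=\mathbb R$. Every pair of adjacent dyadics at every stage satisfies $K_q\Subset K_{q'}$, yet $\bigcap_{q>c}K_q\supseteq(1,3)$ while $\sup_{q<c}K_q=(-c,c)$; the resulting $\Phi$ sends the compactly contained pair $(1.4,2.6)\Subset(1,3)$ to $(c,c)$, which is not a compactly contained pair in $[0,1]$ since $c>0$. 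So the midpoints must be chosen with quantitative control over \emph{all} future subdivisions simultaneously (and compatibly with your bookkeeping for $\sup_q K_q=A$ and $\bigcap_{t>0}K_t=0$); this global taming of limits is exactly the content of the deep result the paper invokes, \cite[Theorem VI-5.11]{GHKLMS-book}, which asserts that in a compact metrisable Lawson semilattice with no non-zero compact elements the bottom and top lie on a common \emph{arc chain} --- a chain compact and connected in the Lawson topology, hence complete and gap-free, and therefore order-homeomorphic to $[0,1]$. Your outline identifies the right obstruction and the right reduction, but at the decisive point it assumes the arc-chain theorem rather than proving it.
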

\begin{proof}
$\mathcal I(A)$ is a continuous lattice by Proposition \ref{p:1}. The fundamental theorem of compact semilattices \cite[Theorem VI-3.4]{GHKLMS-book} implies that $\mathcal I(A)$ equipped with the so-called Lawson topology is a compact topological semilattice with small semilattices.\footnote{One does not need to know how the Lawson topology is defined, nor what having small semilattices means, in order to understand the proof.} Here compact topological semilattice means that the topology on $\mathcal I(A)$ is compact, Hausdorff, and that the meet map $\mathcal I(A) \times \mathcal I(A) \to \mathcal I(A)$, $(I,J) \mapsto I\cap J$ is continuous, see \cite[Definition VI-1.11]{GHKLMS-book}.  

\emph{Claim:} There is a continuous, order preserving map $\Psi \colon [0,1] \to \mathcal I(A)$ such that $\Psi(0) = 0$ and $\Psi(1) = A$.

Suppose the claim has been proved. Clearly any order preserving map out of $[0,1]$ is a semilattice homomorphism, i.e.~a map that preserves meets. As $\Psi$ is continuous and $\Psi(1) = A$, the fundamental theorem of compact semilattices (cited above) implies that $\Psi$ has a lower adjoint $\Phi \colon \mathcal I(A) \to [0,1]$ which preserves compact containment. Being a lower adjoint means that $\Phi$ is order preserving and for $(I, t) \in \mathcal I(A)\times [0,1]$ one has $I \subseteq \Psi(t)$ if and only if $\Phi(I)  \leq t$, see \cite[Definition O-3.1]{GHKLMS-book}. By \cite[Theorem O-3.3]{GHKLMS-book} $\Phi$ preserves suprema so $\Phi$ is a $Cu$-morphism. By the definition of lower adjoints, $\Phi(I) = 0$ implies that $I \subseteq \Psi(0) = 0$. Consequently, $\Phi^{-1}(\{0\}) = \{0\}$ and thus it remains to prove the above claim.

As $\mathcal I(A)$ has a countable basis as a continuous lattice by Proposition \ref{p:1}, the Lawson topology on $\mathcal I(A)$ is second countable by \cite[Theorem III-4.5]{GHKLMS-book}.\footnote{The theorem is applicable for domains and thus also for continuous lattices since a continuous lattice is a domain which is also a complete lattice, see \cite[Defintion I-1.6]{GHKLMS-book}.} Thus $\mathcal I(A)$ is compact and metrisable in the Lawson topology. 

By Remark \ref{r:1}, $\mathcal I(A)$ has no non-zero, compact elements. By \cite[Theorem VI-5.11]{GHKLMS-book} it follows that the elements $0,A\in \mathcal I(A)$ both lie on an arc chain, i.e.~there is a subset $\mathcal C \subseteq \mathcal I(A)$ with $0,A \in \mathcal C$ for which $\mathcal C$ is a compact, connected chain, see \cite[Definition VI-5.5]{GHKLMS-book}.\footnote{Arc chains are defined as subsets of pospaces. By \cite[Proposition VI-1.14]{GHKLMS-book} it follows that any compact semilattice is a pospace, so this makes sense in our case.} As $\mathcal I(A)$ is metrisable, so is $\mathcal C$.

By \cite[Proposition VI-5.4]{GHKLMS-book},\footnote{This is applicable since the order on $\mathcal I(A)$ is semiclosed with respect to the Lawson topology due to the remark following \cite[Definition III-5.1]{GHKLMS-book}.} the induced topology on $\mathcal C$ is the order topology, so $\mathcal C$ is a compact, metrisable, connected chain with exactly two non-cut points.\footnote{In fact, if $0 < I < A$ in $\mathcal C$, then $[0,I)$ and $(I,A]$ are both clopen subsets of $\mathcal C\setminus\{I\}$ in the order topology, so any such $I$ is a cut point. The elements $0$ and $A$ are not cut-points as these are the minimal and maximal element respectively in $\mathcal C$.} 
Hence there is an order preserving homeomorphism $\Psi \colon [0,1] \to \mathcal C$ which necessarily satisfies $\Psi(0) = 0$ and $\Psi(1) = A$. 
The induced map $\Psi \colon [0,1] \to \mathcal C \subseteq \mathcal I(A)$ is continuous, order preserving and $\Psi(0) = 0$, $\Psi(1) = A$. This finishes the proof of the claim and thus also of the lemma.
\end{proof}

If $\phi \colon A \to B$ is a $\ast$-homomorphism between $\mathrm C^\ast$-algebras then 
\begin{equation}
\mathcal I(\phi) \colon \mathcal I(A) \to \mathcal I(B), \qquad I \mapsto \overline{B\phi(I)B}
\end{equation}
is a $Cu$-morphism by \cite[Lemma 2.12]{Gabe-O2class}. 
A major tool in the proof of Theorem \ref{t:A} is the ideal related version of Kirchberg’s $\mathcal O_2$-embedding theorem \cite[Hauptsatz 2.15]{Kirchberg-non-simple}, in the form given in \cite{Gabe-O2class}.

\begin{theorem2}[{\cite[Theorem 6.1]{Gabe-O2class}}]\label{t:O2}
Let $A$ be a separable, exact $\mathrm C^\ast$-algebra, let $B$ be a separable, nuclear, $\mathcal O_\infty$-stable $\mathrm C^\ast$-algebra, and let $\Phi \colon \mathcal I(A) \to \mathcal I(B)$ be a $Cu$-morphism. Then there exists a $\ast$-homomorphism $\phi \colon A \to B$ such that $\mathcal I(\phi) = \Phi$.
\end{theorem2}

Rørdam \cite{Rordam-purelyinfAH} constructed the $\mathrm C^\ast$-algebra $\mathcal A_{[0,1]}$ as follows: Let $(t_n)_{n\in \mathbb N}$ be a dense sequence in $[0,1)$ and let $\phi_n \colon C_0([0,1), M_{2^{n}}) \to C_0([0,1), M_{2^{n+1}})$ be given by
\begin{equation}
\phi_n(f) (t) = \left( \begin{array}{cc} f(t) & 0 \\ 0 &  f(\max\{ t, t_n\}) \end{array} \right) \in M_2(M_{2^n}) \cong M_{2^{n+1}}
\end{equation}
for $f\in C_0([0,1),M_{2^n})$, $t\in [0,1)$. Define 
\begin{equation}\label{eq:A01}
\mathcal A_{[0,1]} := \varinjlim (C_0([0,1), M_{2^n}), \phi_n).
\end{equation}
It follows from Kirchberg’s classification of non-simple $\mathcal O_\infty$-stable $\mathrm C^\ast$-algebras that the isomorphism class of $\mathcal A_{[0,1]}$ does not depend on the choice of $(t_n)_{n\in \mathbb N}$ (see \cite{KirchbergRordam-zero}) though we do not need this fact here.

The essential properties of $\mathcal A_{[0,1]}$ required for Theorem \ref{t:A} are summerised in the next result which is a combination of \cite[Proposition 2.1 and Corollary 5.3]{Rordam-purelyinfAH}.

\begin{theorem2}[Rørdam]\label{t:Rordam}
The $\mathrm C^\ast$-algebra $\mathcal A_{[0,1]}$ is separable, nuclear, $\mathcal O_\infty$-stable, and $ \mathcal I(\mathcal A_{[0,1]}) \cong [0,1]$.
\end{theorem2}

\begin{proof}[Proof of Theorem \ref{t:A}]
$(i)\Rightarrow (ii)$: It is enough to show that $\mathcal A_{[0,1]}$ embeds into $C_0((0,1],\mathcal O_2)$. By \cite[Proposition 6.1]{KirchbergRordam-zero}, $\mathcal A_{[0,1]}$ is zero-homotopic so in particular $\mathcal A_{[0,1]}$ embeds into $C_0((0,1],\mathcal A_{[0,1]})$. By the $\mathcal O_2$-embedding theorem \cite{KirchbergPhillips-embedding}, this $\mathrm C^\ast$-algebra embeds into $C_0((0,1],\mathcal O_2)$.\footnote{Alternative proof: The map $\Phi \colon [0,1] \to \mathcal I(C_0((0,1],\mathcal O_2))$ given by $\Phi(t) = C_0((\tfrac{1-t}{2}, \tfrac{1+t}{2}), \mathcal O_2)$ is a $Cu$-morphism for which $\Phi^{-1}(\{0\}) = \{ 0\}$. Hence $\Phi$ induces an embedding $\mathcal A_{[0,1]} \hookrightarrow C_0((0,1],\mathcal O_2)$ by Theorems \ref{t:O2} and \ref{t:Rordam}.} 

$(ii)\Rightarrow (iii)$: Follows since $C_0((0,1],\mathcal O_2)$ is zero-homotopic.

$(iii)\Rightarrow (iv)$: Suppose $D$ is zero-homotopic and that $A$ embeds into $D$. As $D$ embeds into $C_0((0,1], D)$, so does $A$. Fix an embedding $\phi \colon A \to C_0((0,1],D)$. Suppose for contradiction that $\Prim A$ has a non-empty, compact, open subset, or equivalently by Remark \ref{r:1}, that $A$ contains a non-zero compact ideal $I$. As $\mathcal I(\phi) \colon \mathcal I(A) \to \mathcal I(C_0((0,1],D))$ is a $Cu$-morphism and since $\phi$ is injective, $\mathcal I(\phi)(I)$ is a non-zero, compact ideal in $C_0((0,1], D)$. Hence $\Prim(C_0((0,1],D)) \cong (0,1]\times \Prim D$ contains a non-empty, compact, open subset. By projecting onto $(0,1]$, we obtain a non-empty, compact, open subset of $(0,1]$ which is a contradiction. Hence $(iv)$ follows.

$(iv)\Rightarrow (i)$: Suppose that $\Prim A$ has no non-empty, compact, open subsets. By Lemma \ref{l:3} there is a $Cu$-morphism $\Phi \colon \mathcal I(A) \to [0,1]$ such that $\Phi^{-1}(\{0\}) = \{0\}$.  By Theorem \ref{t:Rordam}, $\mathcal A_{[0,1]}$ is separable, nuclear, $\mathcal O_\infty$-stable and there is an order isomorphism $\Theta \colon [0,1] \xrightarrow \cong \mathcal I(\mathcal A_{[0,1]})$. So by Theorem \ref{t:O2} there is a $\ast$-homomorphism $\phi \colon A \to \mathcal A_{[0,1]}$ such that $\mathcal I(\phi) = \Theta \circ \Phi$. As $\mathcal I(\phi)^{-1}(\{0\}) = \{ 0 \}$ it follows that $\phi$ is injective.
\end{proof}

\section{Proof of Corollary \ref{c:traceless}}\label{s:traceless}

Recall that a separable $\mathrm C^\ast$-algebra $B$ is said to be \emph{MF} if there is a sequence $(k_i)_{i\in \mathbb N}$ in $\mathbb N$ such that $B$ embeds into $ \prod_{i\in \mathbb N}M_{k_i} / \bigoplus_{i\in \mathbb N} M_{k_i}$. In particular, any separable, quasidiagonal $\mathrm C^\ast$-algebra is MF.

Before proving Corollary \ref{c:traceless}, I prove the following elementary lemma which is definitely well-known to many people.
In what follows, $\widetilde A$ denotes the minimal unitisation of $A$, and $\otimes$ denotes the spatial tensor product.

\begin{lemma}\label{l:stablyfinite}
Let $A$ be an exact, stably finite $\mathrm C^\ast$-algebra and let $B$ be a separable, MF $\mathrm C^\ast$-algebra. Then   $A \otimes B$ is stably finite.
\end{lemma}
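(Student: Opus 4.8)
The plan is to realise $A \otimes B$ as a $\mathrm C^\ast$-subalgebra of a quotient of a product of stably finite algebras, and then to prove directly that quotients of this particular shape are stably finite. Throughout I will use two elementary facts: stable finiteness passes to $\mathrm C^\ast$-subalgebras (a unital inclusion into a finite algebra sends isometries to unitaries), and $M_n(A)$ is stably finite whenever $A$ is.

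\textbf{Step 1 (the embedding).} Since $B$ is MF, I fix an embedding $B \hookrightarrow \prod_i M_{k_i}/\bigoplus_i M_{k_i}$. Exactness of $A$ guarantees that $A \otimes -$ preserves injections and short exact sequences, so tensoring produces an embedding $A \otimes B \hookrightarrow A \otimes (\prod_i M_{k_i}/\bigoplus_i M_{k_i})$, and the right-hand side is identified, via the exact sequence $0 \to \bigoplus_i M_{k_i} \to \prod_i M_{k_i} \to \prod_i M_{k_i}/\bigoplus_i M_{k_i} \to 0$ and the equality $A \otimes \bigoplus_i M_{k_i} = \bigoplus_i M_{k_i}(A)$, with $(A \otimes \prod_i M_{k_i})/\bigoplus_i M_{k_i}(A)$. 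The natural $\ast$-homomorphism $\theta \colon A \otimes \prod_i M_{k_i} \to \prod_i M_{k_i}(A)$ is injective, as one sees by realising both sides faithfully on $\bigoplus_i (H \otimes \mathbb C^{k_i})$ for a faithful representation $A \subseteq B(H)$. It descends to the quotients, and I claim the descended map is injective, i.e. $\theta^{-1}(\bigoplus_i M_{k_i}(A)) = \bigoplus_i M_{k_i}(A)$: if $\theta(x)$ has coordinates tending to $0$ in norm, then cutting $x$ by the multipliers $1_A \otimes P_N$, where $P_N$ is the central projection onto the first $N$ coordinates of $\prod_i M_{k_i}$, and using that $\theta$ is isometric, shows $x = \lim_N (1_A \otimes P_N)x \in \bigoplus_i M_{k_i}(A)$. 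Hence $A \otimes B$ embeds into $\prod_i M_{k_i}(A)/\bigoplus_i M_{k_i}(A)$, and it suffices to prove that the latter is stably finite.

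\textbf{Step 2 (finiteness of the quotient).} Since $M_n$ of this algebra has the same form with $k_i$ replaced by $nk_i$, and $M_{nk_i}(A)$ is again stably finite, it is enough to show that $\prod_i M_{l_i}(A)/\bigoplus_i M_{l_i}(A)$ is finite for every sequence $(l_i)$. Setting $D_i = \widetilde{M_{l_i}(A)}$, which is finite because $M_{l_i}(A)$ is stably finite, and $J_i = M_{l_i}(A) \trianglelefteq D_i$ with $D_i/J_i = \mathbb C$, I reduce to the statement: if the $D_i$ are finite unital $\mathrm C^\ast$-algebras with ideals $J_i \trianglelefteq D_i$ and $D_i/J_i$ finite, then $\prod_i D_i/\bigoplus_i J_i$ is finite (the unitization of the target embeds unitally here). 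Given an isometry $V$ there, I lift it to $(V_i) \in \prod_i D_i$ with $h_i := V_i^\ast V_i - 1 \in J_i$ and $\|h_i\| \to 0$. Finiteness of $D_i/J_i$ forces the image of $V_i$ to be unitary, so $V_iV_i^\ast - 1 \in J_i$ for every $i$. For all large $i$ one has $\|h_i\| < 1$, so $V_i^\ast V_i$ is invertible; since $D_i$ is finite, $V_i$ is then invertible and $V_iV_i^\ast - 1 = V_i h_i V_i^{-1}$, whence $\|V_iV_i^\ast - 1\| \leq \|V_i\|\,\|V_i^{-1}\|\,\|h_i\| \to 0$. Thus $(V_iV_i^\ast - 1) \in \bigoplus_i J_i$ and $V$ is unitary.

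\textbf{Main obstacle.} The conceptual heart is Step 1: exactness of $A$ is exactly what is needed both to tensor the MF embedding into an embedding and to identify $A \otimes (\prod_i M_{k_i}/\bigoplus_i M_{k_i})$ with a quotient of $A \otimes \prod_i M_{k_i}$, and the cutoff argument for injectivity of the descended map must be carried out with care. In Step 2 the delicate point is passing from ``$V_i^\ast V_i$ close to $1$'' to genuine invertibility of $V_i$, which is precisely where finiteness of each $D_i$ is used (a bounded-below element of a finite $\mathrm C^\ast$-algebra is invertible, via polar decomposition); the finiteness of the quotients $D_i/J_i$ is what secures the coordinatewise membership $V_iV_i^\ast - 1 \in J_i$ that the finitely many small indices would otherwise spoil.
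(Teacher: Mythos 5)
Your proof is correct, and its skeleton is the paper's: use the MF embedding of $B$, use exactness of $A$ to identify $A\otimes\bigl(\prod_i M_{k_i}/\bigoplus_i M_{k_i}\bigr)$ with a quotient, pass into $\prod_i M_{k_i}(A)/\bigoplus_i M_{k_i}(A)$, then lift isometries and argue coordinatewise. The execution, however, differs in ways worth recording. The paper argues by contradiction and unitises at the outset: a non-unitary isometry would live in $M_n(\widetilde{A\otimes B})\subseteq \widetilde A\otimes M_n(\widetilde B)$, and since $M_n(\widetilde B)$ is again MF, everything happens inside $\prod_i(\widetilde A\otimes M_{k_i})/\bigoplus_i(\widetilde A\otimes M_{k_i})$; there the lift is corrected to honest isometries $v_i=\widetilde v_i(\widetilde v_i^\ast\widetilde v_i)^{-1/2}$, and the punchline is the norm identity $\limsup_i\|v_iv_i^\ast-1\|=\|vv^\ast-1\|=1$, which drops a non-unitary isometry into a single $M_{k_i}(\widetilde A)$. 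You instead prove finiteness of the ambient quotient directly, via a standalone extension lemma (finite $D_i$ and finite $D_i/J_i$ imply $\prod_i D_i/\bigoplus_i J_i$ finite), whose engine is the conjugation identity $V_iV_i^\ast-1=V_ih_iV_i^{-1}$ together with ``left-invertible plus finite implies invertible''. Your route buys a reusable lemma and a direct, non-contradiction statement, and you also prove (by the $1_A\otimes P_N$ cutoff) the injectivity of the descended map into $\prod_i M_{k_i}(A)/\bigoplus_i M_{k_i}(A)$, which the paper merely asserts as the final embedding in its displayed chain; the paper's route buys brevity, since unitising first makes every coordinate algebra unital and the $\limsup$ identity replaces your invertibility estimates.

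One small bookkeeping slip, harmless but worth fixing: stable finiteness of $C:=\prod_i M_{k_i}(A)/\bigoplus_i M_{k_i}(A)$ asks for finiteness of $M_n(\widetilde C)$, whereas your reduction as literally instantiated (with $D_i=\widetilde{M_{l_i}(A)}$, $l_i=nk_i$) yields finiteness of $\widetilde{M_n(C)}$, a smaller unital subalgebra. The fix stays entirely inside your framework: apply your extension lemma with $D_i=M_n(\widetilde{M_{k_i}(A)})$ and $J_i=M_{nk_i}(A)$, whose quotients $M_n(\mathbb C)$ are finite---exactly the generality your lemma already allows---or note that an isometry $v\in M_n(\widetilde C)$ becomes an isometry $\pi(v)^\ast v\in\widetilde{M_n(C)}$ after multiplying by the scalar unitary $\pi(v)^\ast$, where $\pi\colon M_n(\widetilde C)\to M_n(\mathbb C)$ is the quotient map.
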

\begin{proof}
Suppose for contradiction that $A \otimes B$ is not stably finite. Then
\begin{equation}
M_n(\widetilde{A \otimes B}) \subseteq M_n(\widetilde A \otimes \widetilde B) \cong \widetilde A \otimes M_n(\widetilde B)
\end{equation}
contains a non-unitary isometry for some $n$. Note that $M_n(\widetilde B)$ is also separable and MF. Any embedding $M_n(\widetilde B) \hookrightarrow \prod_{i\in \mathbb N}M_{k_i} / \bigoplus_{i\in \mathbb N} M_{k_i}$ induces
\begin{equation}
\widetilde A \otimes M_n(\widetilde B) \hookrightarrow \widetilde A \otimes \frac{\prod M_{k_i}}{\bigoplus M_{k_i}} \cong \frac{\widetilde A \otimes \prod M_{k_i}}{\widetilde A \otimes \bigoplus  M_{k_i} } \hookrightarrow \frac{\prod  \widetilde A \otimes M_{k_i}}{\bigoplus  \widetilde A \otimes M_{k_i}},
\end{equation}
where the isomorphism above exists by exactness of $\widetilde A$. Thus $\frac{\prod  \widetilde A \otimes M_{k_i}}{\bigoplus  \widetilde A \otimes M_{k_i}}$ contains a non-unitary isometry $v$. Let $(\widetilde v_i)_{i\in \mathbb N} \in \prod \widetilde A \otimes M_{k_i}$ be a lift of $v$. One has $\| \widetilde v_i^\ast \widetilde v_i - 1\| < 1$ for large $i$, so $\widetilde v_i^\ast \widetilde v_i$ is invertible. Let $v_i := \widetilde v_i (\widetilde v_i^\ast \widetilde v_i)^{-1/2}$ for sufficiently large $i$, and otherwise $v_i :=1$. Then $(v_i)_{i\in \mathbb N}$ is a lift of $v$, each $v_i$ is an isometry, and $\limsup_{i\to \infty} \| v_i v_i^\ast - 1\| = \| vv^\ast - 1\| = 1$. Hence $v_i \in \widetilde A \otimes M_{k_i}$ is a non-unitary isometry for some $i$. This contradicts that $A$ is stably finite, so $A\otimes B$ must be stably finite.
\end{proof}

\begin{proof}[Proof of Corollary \ref{c:traceless}]
$(iv)\Rightarrow (i)$ follows from Corollary \ref{c:AFemb}, and $(i) \Rightarrow (ii)\Rightarrow (iii)$ is well-known.

$(iii)\Rightarrow (iv)$: Suppose that $A$ is stably finite and let $\mathcal Z$ denote the Jiang--Su algebra. By \cite[Corollary 5.1]{Rordam-realrankZ}, $A\otimes \mathcal Z$ is purely infinite. As $\Prim A \cong \Prim A\otimes \mathcal Z$ (since $\mathcal Z$ is simple and nuclear) it follows from \cite[Proposition 2.7]{PasnicuRordam-purelyinfrr0} that any non-empty, compact, open subset in $\Prim A$ gives rise to a non-zero projection in $A\otimes \mathcal Z$. As any non-zero projection in a purely infinite $\mathrm C^\ast$-algebra is properly infinite, see \cite[Theorem 4.16]{KirchbergRordam-purelyinf}, and as $A\otimes \mathcal Z$ is stably finite by Lemma \ref{l:stablyfinite}, it follows that $\Prim A$ has no non-empty, compact, open subsets. Hence $(iii)\Rightarrow (iv)$.
\end{proof}

\section{Proof of Corollary \ref{c:unsuspend}}\label{s:unsuspended}

Following Dadarlat and Loring \cite{DadarlatLoring-unsuspend} a separable $\mathrm C^\ast$-algebra $A$ is called \emph{homotopy symmetric} if $[[A, A\otimes \mathbb K]]$ is a group. They proved the stunning result, that if $A$ is homotopy symmetric then the unsuspended $E$-theory $[[A,B\otimes \mathbb K]]$ is a group for \emph{any} separable $\mathrm C^\ast$-algebra $B$, and moreover that the suspension functor induces an isomorphism 
\begin{equation}
[[A, B\otimes \mathbb K]] \xrightarrow \cong E(A,B).
\end{equation}
While homotopy symmetry characterises when $E$-theory is unsuspendable, it is in general a property which is very hard to verify. Dadarlat and Pennig \cite{DadarlatPennig-connective} introduced connectivity of $\mathrm C^\ast$-algebras to gain a better understanding of homotopy symmetry. A separable $\mathrm C^\ast$-algebra $A$ is \emph{connective}\footnote{In \cite{DadarlatPennig-defnil} the same definition was called \emph{property (QH)}.} if there is a embedding 
\begin{equation}
A \to \frac{\prod_{\mathbb N} C_0((0,1], B(\mathcal H))}{\bigoplus_{\mathbb N} C_0((0,1], B(\mathcal H))}
\end{equation}
which has a completely positive contractive lift $A \to \prod_{\mathbb N} C_0((0,1], B(\mathcal H))$. Here $\mathcal H$ is a separable, infinite dimensional Hilbert space.
Dadarlat and Pennig proved the following remarkable theorem \cite[Theorem 3.1]{DadarlatPennig-defnil}. 

\begin{theorem2}[Dadarlat--Pennig]\label{t:DP}
A separable, nuclear $\mathrm C^\ast$-algebra is homotopy symmetric if and only if it is connective.
\end{theorem2}

Applying results from \cite{PasnicuRordam-purelyinfrr0}, it was observed in \cite[Proposition 2.7$(i)$]{DadarlatPennig-connective} that having a non-empty, compact, open subset in the primitive ideal space is an obstruction of connectivity. Using an embedding theorem of Blanchard for continuous fields \cite{Blanchard-O2cont}, it was shown in \cite[Proposition 2.7$(ii)$]{DadarlatPennig-connective} that this is the only obstruction for separable, nuclear $\mathrm C^\ast$-algebras provided the primitive ideal space is Hausdorff.

These results were my main inspiration for proving Theorem \ref{t:A}, which implies that this spectral obstruction is the only obstruction of connectivity for all separable, exact $\mathrm C^\ast$-algebras. 

\begin{corollary2}\label{c:connective}
A separable, exact $\mathrm C^\ast$-algebra is connective if and only if its primitive ideal space has no non-empty, compact, open subsets.
\end{corollary2}
\begin{proof}
``Only if'' is \cite[Proposition 2.7$(i)$]{DadarlatPennig-connective}. For ``if'', note that connectivity passes to $\mathrm C^\ast$-subalgebras so by Theorem \ref{t:A} it suffices to prove that $\mathcal A_{[0,1]}$ is connective. By construction, see \eqref{eq:A01}, $\mathcal A_{[0,1]}$ is an inductive limit of $\mathrm C^\ast$-algebras of the form $C_0([0,1), M_{2^n})$ which are connective. It follows from \cite[Theorem 3.3$(c)$]{DadarlatPennig-defnil} that $\mathcal A_{[0,1]}$ is connective.
\end{proof}

This corollary together with Theorem \ref{t:DP} and the main result of \cite{DadarlatLoring-unsuspend} gives the proof of Corollary \ref{c:unsuspend}. Although $(i)\Rightarrow (ii)$ is a well-known consequence of \cite[Theorem 4.3]{DadarlatLoring-unsuspend}, I will present a sketch of the proof for completion.

\begin{proof}[Proof of Corollary \ref{c:unsuspend}]
$(ii) \Rightarrow (i)$: This is obvious.

$(i) \Rightarrow (ii)$: Suppose $(i)$ holds. By \cite[Theorem 4.3]{DadarlatLoring-unsuspend}, the double-suspension $S\circ S$ induces an isomorphism $[[A, B \otimes \mathbb K]] \cong E(SA, SB)$ for all $B$ when $A$ is also assumed to be stable. It is well-known that stability of $A$ is not needed for this result to hold, and was only assumed to ease notation. Moreover, as $E$-theory satisfies Bott periodicity, suspension induces an isomorphism $E(A,B) \cong E(SA,SB)$ and thus it clearly follows that suspension induces an isomorphism $[[A, B\otimes \mathbb K]] \xrightarrow \cong E(A,B)$ for any $B$.

$(i) \Leftrightarrow (iii)$: As any nuclear $\mathrm C^\ast$-algebra is exact, this is an immediate consequence of Theorem \ref{t:DP} and Corollary \ref{c:connective}.
\end{proof}

\subsection*{Acknowledgment} I would like to thank the referee for suggesting that I expand Theorem \ref{t:A}. I am very grateful to Stuart White for several helpful comments and suggestions.

\newcommand{\etalchar}[1]{$^{#1}$}


\begin{thebibliography}{GHK{\etalchar{+}}03}

\bibitem[BK97]{BlackadarKirchberg-genindlimfindim}
B. Blackadar and E. Kirchberg.
\newblock Generalized inductive limits of finite-dimensional {$\mathrm C^\ast$}-algebras.
\newblock {\em Math. Ann.}, 307(3):343--380, 1997.

\bibitem[Bla97]{Blanchard-O2cont}
E. Blanchard.
\newblock Subtriviality of continuous fields of nuclear
  {$\mathrm C^\ast$}-algebras.
\newblock {\em J. Reine Angew. Math.}, 489:133--149, 1997.

\bibitem[Bro98]{Brown-AFemb} 
N. Brown.
\newblock {AF} embeddability of crossed products of {AF} algebras by the integers.
\newblock {\em J. Funct. Anal.}, 160(1):150--175, 1998.

\bibitem[Con76]{Connes-class}
A. Connes.
\newblock Classification of injective factors. {C}ases {$II_{1},$} {$II_{\infty
  },$} {$III_{\lambda },$} {$\lambda \not=1$}.
\newblock {\em Ann. of Math. (2)}, 104(1):73--115, 1976.

\bibitem[CH90]{ConnesHigson-deformations}
A. Connes and N. Higson.
\newblock D\'eformations, morphismes asymptotiques et {$K$}-th\'eorie
  bivariante.
\newblock {\em C. R. Acad. Sci. Paris S\'er. I Math.}, 311(2):101--106, 1990.

\bibitem[DL94]{DadarlatLoring-unsuspend}
M. Dadarlat and T.~A. Loring.
\newblock {$K$}-homology, asymptotic representations, and unsuspended
  {$E$}-theory.
\newblock {\em J. Funct. Anal.}, 126(2):367--383, 1994.

\bibitem[DP17a]{DadarlatPennig-connective}
M. Dadarlat and U. Pennig.
\newblock Connective {$\mathrm C^\ast$}-algebras.
\newblock {\em J. Funct. Anal.}, 272(12):4919--4943, 2017.

\bibitem[DP17b]{DadarlatPennig-defnil}
M. Dadarlat and U. Pennig.
\newblock Deformations of nilpotent groups and homotopy symmetric
  {$\mathrm C^\ast$}-algebras.
\newblock {\em Math. Ann.}, 367(1-2):121--134, 2017.

\bibitem[Gab18]{Gabe-O2class}
J. Gabe.
\newblock A new proof of {K}irchberg's {$\mathcal O_2$}-stable classification.
\newblock To appear in {\em J. Reine Angew. Math.} (arXiv:1706.03690v2)

\bibitem[GHK{\etalchar{+}}03]{GHKLMS-book}
G. Gierz, K.~H. Hofmann, K. Keimel, J.~D. Lawson, M. Mislove, and D.~S. Scott.
\newblock {\em Continuous lattices and domains}, volume~93 of {\em Encyclopedia
  of Mathematics and its Applications}.
\newblock Cambridge University Press, Cambridge, 2003.

\bibitem[Kir94]{Kirchberg-simple}
E. Kirchberg.
\newblock The classification of purely infinite {$\mathrm C^\ast$}-algebras using
  {K}asparov's theory.
\newblock 1994.

\bibitem[Kir95]{Kirchberg-ICM}
E. Kirchberg.
\newblock Exact {${\mathrm C}^\ast$}-algebras, tensor products, and the classification
  of purely infinite algebras.
\newblock In {\em Proceedings of the {I}nternational {C}ongress of
  {M}athematicians, {V}ol.\ 1, 2 ({Z}\"urich, 1994)}, pages 943--954.
  Birkh\"auser, Basel, 1995.

\bibitem[Kir00]{Kirchberg-non-simple}
E. Kirchberg.
\newblock Das nicht-kommutative {M}ichael-{A}uswahlprinzip und die
  {K}lassifikation nicht-einfacher {A}lgebren.
\newblock In {\em {$\mathrm C^\ast$}-algebras ({M}\"unster, 1999)}, pages 92--141.
  Springer, Berlin, 2000.

\bibitem[KP00]{KirchbergPhillips-embedding}
E. Kirchberg and N.~C. Phillips.
\newblock Embedding of exact {$\mathrm C^\ast$}-algebras in the {C}untz algebra {$\mathcal
  O_2$}.
\newblock {\em J. Reine Angew. Math.}, 525:17--53, 2000.

\bibitem[KR00]{KirchbergRordam-purelyinf}
E. Kirchberg and M. R{\o}rdam.
\newblock Non-simple purely infinite {$\mathrm C^\ast$}-algebras.
\newblock {\em Amer. J. Math.}, 122(3):637--666, 2000.

\bibitem[KR02]{KirchbergRordam-absorbingOinfty}
E. Kirchberg and M. R{\o}rdam.
\newblock Infinite non-simple {$\mathrm C^\ast$}-algebras: absorbing the {C}untz algebras
  {$\mathcal{O}_\infty$}.
\newblock {\em Adv. Math.}, 167(2):195--264, 2002.

\bibitem[KR05]{KirchbergRordam-zero}
E. Kirchberg and M. R{\o}rdam.
\newblock Purely infinite {$\mathrm C^\ast$}-algebras: ideal-preserving zero homotopies.
\newblock {\em Geom. Funct. Anal.}, 15(2):377--415, 2005.

\bibitem[Oza03]{Ozawa-AFembeddability}
N. Ozawa.
\newblock Homotopy invariance of {AF}-embeddability.
\newblock {\em Geom. Funct. Anal.}, 13(1):216--222, 2003.

\bibitem[ORS15]{OzawaRordamSato-elementaryamenable}
N. Ozawa, M. R{\o}rdam, and Y. Sato.
\newblock Elementary amenable groups are quasidiagonal.
\newblock {\em Geom. Funct. Anal.}, 25(1):307--316, 2015.

\bibitem[PR07]{PasnicuRordam-purelyinfrr0}
C. Pasnicu and M. R{\o}rdam.
\newblock Purely infinite {$\mathrm C^\ast$}-algebras of real rank zero.
\newblock {\em J. Reine Angew. Math.}, 613:51--73, 2007.

\bibitem[Ped79]{Pedersen-book-automorphism}
G.~K. Pedersen.
\newblock {\em {$\mathrm C^{\ast} $}-algebras and their automorphism groups}, volume~14
  of {\em London Mathematical Society Monographs}.
\newblock Academic Press Inc. [Harcourt Brace Jovanovich Publishers], London,
  1979.

\bibitem[Phi00]{Phillips-classification}
N.~C. Phillips.
\newblock A classification theorem for nuclear purely infinite simple
  {$\mathrm C^\ast$}-algebras.
\newblock {\em Doc. Math.}, 5:49--114 (electronic), 2000.

\bibitem[Pim83]{Pimsner-embtransgroup}
M. Pimsner.
\newblock Embedding some transformation group {$\mathrm C^{\ast} $}-algebras
  into {AF}-algebras.
\newblock {\em Ergodic Theory Dynam. Systems}, 3(4):613--626, 1983.

\bibitem[PV80]{PimsnerVoiculescu-embedding}
M. Pimsner and D. Voiculescu. 
\newblock Imbedding the irrational rotation {C$^\ast$}-algebra into an {AF}-algebra.
\newblock {\em J. Operator Theory}, 4(1):201--210, 1980.

\bibitem[R{\o}r04a]{Rordam-purelyinfAH}
M. R{\o}rdam.
\newblock A purely infinite {AH}-algebra and an application to
  {AF}-embeddability.
\newblock {\em Israel J. Math.}, 141:61--82, 2004.

\bibitem[R{\o}r04b]{Rordam-realrankZ}
M. R{\o}rdam.
\newblock The stable and the real rank of {$\mathcal Z$}-absorbing
  {$\mathrm C^\ast$}-algebras.
\newblock {\em Internat. J. Math.}, 15(10):1065--1084, 2004.

\bibitem[TWW17]{TikuisisWhiteWinter-QDnuc}
A. Tikuisis, S. White, and W. Winter.
\newblock Quasidiagonality of nuclear {$\mathrm C^\ast$}-algebras.
\newblock {\em Ann. of Math. (2)}, 185(1):229--284, 2017.

\bibitem[Voi91]{Voiculescu-qdhtpy}
D. Voiculescu.
\newblock A note on quasi-diagonal {$\mathrm C^\ast$}-algebras and homotopy.
\newblock {\em Duke Math. J.}, 62(2):267--271, 1991.

\end{thebibliography}
\end{document}